\theoremstyle{plain} 
\newtheorem{lemma}{Lemma}[section]
\newtheorem{thm}[lemma]{Theorem}
\newtheorem{prop}[lemma]{Proposition}
\newtheorem{cor}[lemma]{Corollary}
\theoremstyle{definition} 
\newtheorem{definition}[lemma]{Definition}
\theoremstyle{remark}
\newtheorem{remark}[lemma]{Remark}
\title{Linear growth for semigroups which are disjoint unions of finitely many copies of the free monogenic semigroup}
\begin{document}

\maketitle
\vskip .1in
\centerline{\large \bf Nabilah Abughazalah, Pavel Etingof}
\vskip .3in

\begin{abstract}
We show that every semigroup which is a finite disjoint union of copies of the free monogenic
semigroup (natural numbers under addition) has linear growth. 
This implies that the the corresponding semigroup algebra is a PI algebra. 
\bigskip

\noindent
\end{abstract}
\section{Introduction}
It is well known that a semigroup may sometimes be decomposed into a disjoint union of subsemigroups which is unlike the structures of classical algebra such as groups and rings. For instance, the Rees Theorem states that every completely simple semigroup is a Rees matrix semigroup over a group $G$, and is thus a disjoint union of copies of $G$, see \cite[Theorem 3.3.1]{howie95}; every Clifford semigroup is a strong semilattice of groups and as such it is a disjoint union of its maximal subgroups, see \cite[Theorem 3.3.1]{howie95}; every commutative semigroup is a semilattice of archimedean semigroups, see \cite[Theorem 3.3.1]{grillet95}.

\quad  If $S$ is a semigroup which can be decomposed into a disjoint union of subsemigroups, then it is natural to ask how the properties of $S$ depend on these  subsemigroups. For
example, if the subsemigroups are finitely generated, then so is $S$. Araújo et al. \cite {araujo01} consider the finite presentability of semigroups which are disjoint unions of finitely presented subsemigroups; Golubov \cite {golubov75} showed that a semigroup which is a disjoint union of residually finite subsemigroups is residually finite.

\quad For semigroups $S$ which are disjoint unions of finitely many copies of the free monogenic semigroup, the authors in \cite {abughazalah13} proved
that $S$ is finitely presented and residually finite; in \cite {abughazalah14} the authors proved that, up to isomorphism and anti-isomorphism, there are only two types of semigroups which are unions of two copies of the free monogenic
semigroup. Similarly, they showed that there are only nine types of semigroups which are unions of three copies of the free monogenic semigroup and provided finite presentations for semigroups of each of these types. 

\quad In this paper we continue investigating semigroups which are disjoint unions of finitely many copies of the free monogenic semigroup. Specifically, our main result is that every semigroup which is a finite disjoint union of copies of the free monogenic
semigroup has linear growth. This implies that the the corresponding semigroup algebra is a PI algebra. 
\\

{\bf Acknowledgments.}   The work of P.E. was partially supported by the NSF grant DMS-1000113. 
Both authors gratefully acknowledge the support of Ibn Khaldun Fellowship (Aramco). The first author is deeply grateful for the generous hospitality and excellent working conditions with Prof Pavel Etingof at MIT. The first author wants to thank Princess Nourah bint Abdulrahman University (PNU) for supporting the fellowship.

\section{The linear growth theorem}

Let $S$ be a semigroup which is a disjoint union of finitely many 
copies of the free monogenic semigroup:
$$S=\bigcup_{a\in A} N_a,$$
where $A$ is a finite set and $N_a=\langle a\rangle$ for $a\in A$.
Define the functions \linebreak $f: S\times A\times {\Bbb Z}_{>0}\to A$ and $n: S\times A\times {\Bbb Z}_{>0}\to {\Bbb Z}_{>0}$ 
by the formula 
$$
xy^i=f(x,y,i)^{n(x,y,i)}.
$$
For $y,z\in A, x\in S$, let $E(x,y,z)$ be the set of $i\in {\Bbb Z}_{>0}$ such that $f(x,y,i)=z$ (i.e., $xy^i$ is a power of $z$).  
\begin{definition}
We say that $z$ is $(x,y)- \operatorname{persistent}$ if $E(x,y,z)$
is infinite. 
\end{definition}

\begin{lemma}
\label{firstlem} (i) Let $z$ be $(x,y)$-persistent. 
Then $E(x,y,z)=i_0+E_0(x,y,z)$, where $i_0\in {\Bbb Z}_{>0}$
and $E_0(x,y,z)$ is a nonzero submonoid of ${\Bbb Z}_{\ge 0}$
(an additively closed subset containing $0$). 
Moreover, there exists a rational number $M_x(y,z)\ge 0$,   
such that for any $i\in E(x,y,z)$,  
$$
n(x,y,i)=n_0+M_x(y,z)(i-i_0).
$$

(ii) $M_x(y,z)$ is independent on $x$. 

(iii) The function $f(x,y,i)$ is periodic in $i$ starting from some place. 
\end{lemma}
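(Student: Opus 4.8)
The plan is to study the single sequence $c_i:=xy^i$ ($i\ge 1$), which satisfies $c_{i+1}=c_iy$, so that $c_i$ is obtained from $x$ by iterating right multiplication by $y$. The basic dichotomy is this: if $c_a=c_b$ for some $a<b$, then $c_{a+k}=c_ay^k=c_by^k=c_{b+k}$ for every $k\ge 0$, so $(c_i)$ is eventually periodic; hence either $(c_i)$ is eventually periodic (equivalently, has finite image) or all the $c_i$ are distinct. Fix an $(x,y)$-persistent $z$, set $i_0:=\min E(x,y,z)$ and $n_0:=n(x,y,i_0)$, so $c_{i_0}=z^{n_0}$, and for $j\in E_0:=E(x,y,z)-i_0$ write $c_{i_0+j}=z^{n_0}y^j=:z^{q(j)}$, where $q(0)=n_0$. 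All of part~(i) will follow from the \emph{monotonicity claim}: $q(j)\ge n_0$ for every $j\in E_0$.

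Granting the claim, (i) is immediate: for $j_1,j_2\in E_0$,
$z^{n_0}y^{j_1+j_2}=z^{q(j_1)}y^{j_2}=z^{q(j_1)-n_0}(z^{n_0}y^{j_2})=z^{q(j_1)+q(j_2)-n_0}$, the peeled exponent $q(j_1)-n_0$ being $\ge 0$, so $j_1+j_2\in E_0$ and $q(j_1+j_2)-n_0=(q(j_1)-n_0)+(q(j_2)-n_0)$. Thus $E_0$ is a submonoid of $\mathbb{Z}_{\ge 0}$ containing $0$ (nonzero, since $E(x,y,z)$ is infinite) and $\tilde q:=q-n_0$ is an additive map $E_0\to\mathbb{Z}_{\ge 0}$; evaluating $\tilde q(j_1j_2)$ two ways gives $j_2\tilde q(j_1)=j_1\tilde q(j_2)$, so $\tilde q(j)=Mj$ for a rational $M=M_x(y,z)\ge 0$ independent of the nonzero $j\in E_0$, and $n(x,y,i)=n_0+M(i-i_0)$ for all $i\in E(x,y,z)$. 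For (ii), if $z$ is also $(x',y)$-persistent, with data $i_0',n_0',E_0',M'$, and $n_0\le n_0'$ after possibly interchanging $x$ and $x'$, then for $j\in E_0$ we get $z^{n_0'}y^j=z^{n_0'-n_0}z^{n_0+Mj}=z^{n_0'+Mj}$, while for $j\in E_0'$ we have $z^{n_0'}y^j=z^{n_0'+M'j}$; since a nonzero submonoid of $\mathbb{Z}_{\ge 0}$ contains all large multiples of its $\gcd$, $E_0\cap E_0'$ is infinite, and comparing exponents there forces $M=M'$. For (iii), by (i) each persistent $z$ satisfies $E(x,y,z)=i_0^z+E_0^z$; writing $d^z=\gcd(E_0^z)$, the submonoid $E_0^z$ coincides with $d^z\mathbb{Z}_{\ge 0}$ above some bound, so for large $i$ the condition $f(x,y,i)=z$ is simply $i\equiv i_0^z\pmod{d^z}$; the finitely many non-persistent values of $z$ occur for only finitely many $i$, so for large $i$ the value $f(x,y,i)$ depends only on $i$ modulo $\lcm_z d^z$, which is the assertion.

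The remaining point — and the only real obstacle — is the monotonicity claim, where one must exploit that the pieces $N_a$ are \emph{free} monogenic together with a finiteness input coming from the dichotomy. Suppose $q(d)<n_0$ for some $d\in E_0$ and put $e:=n_0-q(d)\ge 1$. Left-multiplying $z^{n_0}y^d=z^{q(d)}$ by powers of $z$ gives $z^Ny^d=z^{N-e}$ for all $N\ge n_0$, and iterating, $z^Ny^{\ell d}=z^{N-\ell e}$ whenever $N-(\ell-1)e\ge n_0$. In the distinct-elements case, $j\mapsto c_{i_0+j}$ is injective, so $q$ is injective on $E_0$ and $q(j)\to\infty$ as $j\to\infty$ in $E_0$; choose $j\in E_0$ large enough that $q>n_0$ throughout $E_0\cap[j,\infty)$. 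The displayed identities show inductively that $j+\ell d\in E_0$ with $q(j+\ell d)=q(j)-\ell e$ for every $\ell\ge 0$, whence $q(j+\ell d)\to-\infty$, which is absurd. In the eventually periodic case, $(c_i)$ lands on a cycle of some period $p$; since $E(x,y,z)$ is infinite the cycle meets $N_z$, say at $z^{m_1}$, so $z^{m_1}y^p=z^{m_1}$ and therefore $z^Ny^p=z^N$ for every $N\ge m_1$; thus each $z^{m_1+K}$ is periodic under right multiplication by $y$ and its forward orbit has at most $p$ elements. But for $K$ large the elements $z^{m_1+K}y^{\ell d}=z^{m_1+K-\ell e}$ with $\ell=0,1,\dots,\lfloor(m_1+K-n_0)/e\rfloor$ are more than $p$ distinct members of that orbit, a contradiction. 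Hence $q(j)\ge n_0$ for all $j\in E_0$, and the lemma follows.
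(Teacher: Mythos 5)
Your proof is correct, but your route to part (i) is genuinely different from the paper's. The paper never needs the monotonicity fact $z^ty^q=z^s\Rightarrow t\le s$ as an input: given an arbitrary relation $z^ty^q=z^s$, it iterates both this relation and the first-return relation $z^{n_0}y^r=z^n$ to obtain two expressions for $z^Ny^{qr}$ valid for all large $N$, namely $z^{N+(s-t)r}$ and $z^{N+Mqr}$ with $M=(n-n_0)/r$; freeness of $N_z$ forces $s-t=Mq$, which is already the linearity statement, and $M\ge 0$ then falls out because the exponents $Mj+n_0$ must be positive for infinitely many $j$. You instead prove the inequality $q(j)\ge n_0$ first, via the dichotomy on the orbit $(xy^i)$ --- a descent of $q$ to $-\infty$ along $j+\ell d$ in the injective case, and a count of the orbit of $z^{m_1+K}$ against the cycle length $p$ in the eventually periodic case --- and only then deduce linearity from additivity of $q-n_0$ on the submonoid $E_0$; both the descent and the counting step are sound, since the hypothesis $N\ge n_0$ needed to keep peeling off $y^d$ is maintained by your choice of $j$, respectively by taking $K$ large. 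The paper's comparison of two iterated relations is shorter and avoids the case analysis entirely; your argument has the virtue of being self-contained on the monotonicity point, which the paper only mentions in a remark with a citation to \cite{abughazalah13}, and the additivity viewpoint makes the submonoid structure of $E_0$ and the rationality and nonnegativity of $M_x(y,z)$ transparent. Your parts (ii) and (iii) are in substance the same as the paper's: compare the two linear formulas on the infinite set $E_0\cap E_0'$, and exploit the eventual arithmetic-progression structure of each $E(x,y,z)$.
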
 

\begin{proof} 
First note that if $z^ty^q=z^s$ for some $t,q,s>0$
then for $N\ge t$ we have 
$z^Ny^q=z^{N+s-t}$, and hence 
\begin{equation}\label{eeq1}
z^Ny^{qm}=z^{N+(s-t)m}
\end{equation} 
for all $m>0$ and $N\ge mt$. 

Now let $i_0$ be the smallest number in $E(x,y,z)$, and $xy^{i_0}=z^{n_0}$. 
By our assumption, there exists $r>0$ such that 
$xy^{i_0+r}=z^n$; let us take the smallest such $r$. 
Thus $z^{n_0}y^r=z^n$. Let $M=M_x(y,z):=\frac{n-n_0}{r}$.
Then by \eqref{eeq1}, for any $p>0$ and $N\ge pn_0$, 
we have 
\begin{equation}\label{eeq2}
z^Ny^{pr}=z^{N+Mpr}.
\end{equation}

Suppose now that $z^ty^q=z^s$ for some $t,q,s>0$.
Then setting $m=r$ in \eqref{eeq1}, $p=q$ in \eqref{eeq2},
and comparing these equations, we get 
\begin{equation}\label{eeq1a}
s-t=Mq.
\end{equation}  

Now, let $E_0(x,y,z)=E(x,y,z)-i_0$. 
By \eqref{eeq1a}, for all $j\in E_0(x,y,z)$, we have  
\begin{equation}\label{eeq4}
z^{n_0}y^j=z^{Mj+n_0}
\end{equation} 
Since this holds for infinitely many $j$, we 
have $Mj+n_0>0$ for infinitely many $j$, hence 
$M\ge 0$. 

Let us show that $E_0(x,y,z)$ is closed under addition. 
Suppose $j,k\in E_0(x,y,z)$. Multiplying \eqref{eeq4} by $y^k$, 
we get 
$$
z^{n_0}y^{j+k}=z^{Mj+n_0}y^k=z^{Mj}z^{Mk+n_0}=z^{M(j+k)+n_0}.
$$
Thus, $j+k\in E_0(x,y,z)$. This proves (i). 

To prove (ii), let $x'\in S$ be another element 
such that $z$ is $(x',y)$-persistent. 
Then by \eqref{eeq2}, we have 
\begin{equation}\label{eeq3a}
z^Ny^{pr}=z^{N+Mpr},\ z^Ny^{pr'}=z^{N+M'pr'}
\end{equation} 
for all $p>0$ and sufficiently large $N$,
where $M'=M_{x'}(y,z)$. 
Thus, 
$$
z^Ny^{rr'}=z^{N+Mrr'}=z^{N+M'rr'}
$$
for large enough $N$, implying $M=M'$, as desired. 

Finally, note that it follows from (i) that 
if $f(x,y,i)=z$ for some $(x,y)$-persistent $z$, and if 
$j\in E_0(x,y,z)$, then $f(x,y,i+j)=z$. 
So statement (iii) follows from the fact that for sufficiently 
large $j$, $xy^j=z^m$ for an $(x,y)$-persistent $z$. 
Namely, the period $T$ for $f(x,y,j)$ can be taken to be 
any positive element of the intersection 
$$
\bigcap_{z\text{ is }(x,y)-\text{persistent}} E_0(x,y,z). 
$$
\end{proof}  

\begin{remark}
 Lemma 2.2 in \cite{abughazalah13} states that if $x^iy^j=x^k$ holds in $S$ for some $x,y\ \in A$ and $i,j,k\in\mathbb{N}$ then $i\leq k$. By this lemma and Lemma \ref{firstlem}, if the set $E(x,y,z)$ contains more than one element then it contains an arithmetic progression, and hence is infinite. 
\end{remark}

In view of Lemma \ref{firstlem}(ii), 
we will say that $z$ is $y$-persistent if there is $x\in S$ 
such that $z$ is $(x,y)$-persistent, and will denote $M_x(y,z)$ simply by $M(y,z)$.  
Notice that $y$ is obviously $y$-persistent, with $M(y,y)=1$.
  
 \begin{lemma}
 \label{lem2.3} Let $x,y,z\in A$. 
 If $z$ is $y-\operatorname{ persistent}$ and $y$ is $x-\operatorname{ persistent}$ then $z$ is $x-\operatorname{persistent}$ and $M(x,z)=M(x,y)M(y,z).$
 \end{lemma}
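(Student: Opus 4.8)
The plan is to produce one explicit element $v\in S$ witnessing that $z$ is $(v,x)$-persistent, and then to recover $M(x,z)$ from Lemma~\ref{firstlem}(i). First I would repackage the hypotheses as two multiplication rules. Because $z$ is $y$-persistent, Lemma~\ref{firstlem}(i) supplies $r,n_0\in\mathbb Z_{>0}$ with
$$z^{n_0}y^{kr}=z^{\,n_0+kM(y,z)r}\qquad(k\ge1),$$
using that the relevant set $E_0$ is a submonoid of $\mathbb Z_{\ge0}$ (hence contains every positive multiple of its least positive element) on which $n$ is affine. In the same way, because $y$ is $x$-persistent, there are $s,m_0\in\mathbb Z_{>0}$ with
$$y^{N}x^{ls}=y^{\,N+lM(x,y)s}\qquad(N\ge m_0,\ l\ge1),$$
the range $N\ge m_0$ obtained by left multiplication by powers of $y$. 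Note $M(y,z)r$ and $M(x,y)s$ are nonnegative integers.

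Now set $v:=z^{n_0}y^{m_0 r}\in S$. For each $t\ge1$, take $l=rt$ in the second rule; since $m_0 r\ge m_0$,
$$y^{m_0 r}x^{rts}=y^{\,m_0 r+rtM(x,y)s}=y^{\,r(m_0+tM(x,y)s)},$$
an exponent that is a positive multiple of $r$. Prepending $z^{n_0}$ and applying the first rule with $k=m_0+tM(x,y)s$ gives
$$vx^{rts}=z^{n_0}y^{\,r(m_0+tM(x,y)s)}=z^{\,n_0+M(y,z)r(m_0+tM(x,y)s)}.$$
The exponent on the right is a positive integer, so $rts\in E(v,x,z)$ for every $t\ge1$; hence $E(v,x,z)$ is infinite, i.e.\ $z$ is $(v,x)$-persistent, and in particular $x$-persistent.

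It remains to identify the slope. The last display presents $n(v,x,\cdot)$, restricted to the infinite set $\{rst:t\ge1\}\subseteq E(v,x,z)$, as the affine function $i\mapsto (n_0+M(y,z)rm_0)+M(x,y)M(y,z)\,i$; on the other hand Lemma~\ref{firstlem}(i) applied to $(v,x,z)$ says $n(v,x,i)$ is affine in $i$ on $E(v,x,z)$ with slope $M(x,z)$. Two affine functions agreeing at infinitely many points have equal slope, so $M(x,z)=M(x,y)M(y,z)$. I expect no serious obstacle; the only point demanding a little care is the derivation of the two multiplication rules in the stated ``for all'' ranges from Lemma~\ref{firstlem}(i) (additive closure of $E_0$, and invariance of the identities under left multiplication by powers of $y$). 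After that, the argument is a direct computation, the sole idea being the choice of $v$ and the substitution $l=rt$ that pushes the intermediate power of $y$ back into $\langle z\rangle$.
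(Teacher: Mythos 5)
Your proof is correct and follows essentially the same strategy as the paper's: build an explicit element of $S$ (you use $z^{n_0}y^{m_0 r}$, the paper uses $vy^{\ell}u$ with $\ell$ chosen to align residues), chain the two affine power relations through it along a common multiple of the two periods, and read off the slope $M(x,y)M(y,z)$ from Lemma~\ref{firstlem}. The only difference is cosmetic — you align the intermediate $y$-exponent by substituting $l=rt$ rather than by prepending $y^{\ell}$ — and all the integrality and nonnegativity checks you flag do go through.
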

 
 \begin{proof} Suppose $z$ is $(v,y)$-persistent, and $y$ is 
$(u,x)$-persistent. 
Then, by Lemma \ref{firstlem}, 
$$
ux^{i_0+rm}=y^{n_0+M(x,y)rm},\quad vy^{i_0'+r'm'}=z^{n_0'+M(y,z)r'm'}
$$
for appropriate $i_0,i_0',n_0,n_0',r,r'\in \Bbb Z_{>0}$ and any $m,m'\in 
\Bbb Z_{\ge 0}$. Let $m=sr'$ with $s\in \Bbb Z_{\ge 0}$, and 
$\ell\ge 0$ be such that $\ell+n_0-i_0'=pr'$ for some $p\in \Bbb Z_{\ge 0}$. 
Then 
$$
(vy^\ell u)x^{i_0+rm}=vy^{\ell+n_0+M(x,y)rm}=vy^{i_0'+r'(p+M(x,y)rs)}=
z^{n_0'+M(y,z)r'(p+M(x,y)rs)}.
$$
Thus, 
$$
(vy^\ell u)x^{i_0+rr's}=z^{n_0'+M(y,z)pr'+M(x,y)M(y,z)rr's}.
$$
By Lemma \ref{firstlem}, this means that $z$ is 
$(vy^\ell u,x)$-persistent, and $M(x,z)=M(x,y)M(y,z)$, as desired. 
\end{proof} 
 
 \begin{lemma} 
 \label{lem2.4} Let $A$ be a finite set, $P\subset A\times A$ be a 
reflexive transitive relation, and $M: P\to \Bbb Z_{\ge 0}$ 
be a nonnegative integer function such that $M(y,y)=1$, and 
$M(x,z)=M(x,y)M(y,z)$ whenever $(x,y),(y,z)\in P$. 
Then there exists a positive integer 
function $\operatorname{d}$ on $A$ such that 
$$\operatorname{d}(y)\geq \operatorname{d}(z)M(y,z)$$
 whenever $(y,z)\in P$. 
 \end{lemma}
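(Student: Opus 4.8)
The plan is to write $d$ down by an explicit formula rather than building it up inductively along $P$. The candidate I would use is
$$
d(a):=\max\{\,M(a,b)\st (a,b)\in P\,\},
$$
the largest multiplier emanating from $a$. This maximum is taken over a finite set (as $A$ is finite), and that set is nonempty and contains the value $M(a,a)=1$ because $P$ is reflexive; hence $d(a)$ is a well-defined positive integer for every $a\in A$. This is precisely where the hypotheses ``$P$ reflexive'' and ``$M(a,a)=1$'' get used: they are what force $d$ to be positive.

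To verify the required inequality, fix $(y,z)\in P$ and pick $w\in A$ with $(z,w)\in P$ realizing the maximum defining $d(z)$, so that $M(z,w)=d(z)$. By transitivity $(y,w)\in P$, and then the multiplicativity hypothesis applied to the pairs $(y,z),(z,w)\in P$ gives
$$
M(y,w)=M(y,z)\,M(z,w)=M(y,z)\,d(z).
$$
But $(y,w)\in P$, so $M(y,w)$ is one of the numbers whose maximum is $d(y)$; thus $M(y,w)\le d(y)$, and combining the two displays yields $d(y)\ge d(z)M(y,z)$, as desired. No case analysis is needed: when $M(y,z)=0$ the inequality just says $d(y)\ge 0$, which holds since $d>0$.

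I do not anticipate a genuine obstacle. The only point worth a moment's thought is the behaviour of $P$ on ``cycles'': if $(y,z),(z,y)\in P$, then $M(y,z)M(z,y)=M(y,y)=1$ forces $M(y,z)=M(z,y)=1$, so a naive inductive construction would first have to quotient $A$ by the equivalence ``$(y,z),(z,y)\in P$'' and then order the quotient topologically — the $\max$-formula above sidesteps this entirely. (If one prefers a ``tighter'' witness one can instead take $d(a):=\lcm\{\,M(a,b)\st (a,b)\in P,\ M(a,b)\ne 0\,\}$; the same argument goes through after a short divisibility check, but the $\max$ version is the most economical.)
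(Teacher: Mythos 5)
Your proof is correct, and it takes a genuinely different and more direct route than the paper's. You define $d$ by the closed formula $d(a)=\max\{M(a,b)\,:\,(a,b)\in P\}$; reflexivity and $M(a,a)=1$ make this a well-defined positive integer, and transitivity plus multiplicativity give $d(y)\ge M(y,w)=M(y,z)M(z,w)=M(y,z)\,d(z)$ for a maximizing $w$, which is exactly the required inequality. The paper instead partitions $A$ into mutual-reachability classes, notes that within one class $M(x,y)M(y,x)=M(x,x)=1$ forces $M>0$ so that one may take $d(b)=M(b,a)$ for a fixed base point $a$ (obtaining the stronger equality $d(y)=d(z)M(y,z)$ there), proves that a sink class exists, and then inducts on $|A|$ by removing a sink class and rescaling $d$ on the complement by a large integer. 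Your argument is shorter and dispenses with all of this graph-theoretic bookkeeping; what the paper's construction buys is that it works verbatim when $M$ takes nonnegative rational rather than integer values, which is the situation actually arising in Corollary \ref{corolld} (there $M(y,z)=(n-n_0)/r$), whereas your max formula would then yield a rational-valued $d$ whose denominators must be cleared at the end --- a one-line fix. Your closing observation about cycles (that $(y,z),(z,y)\in P$ forces $M(y,z)=M(z,y)=1$) is precisely the point the paper elaborates into its base case of a single equivalence class.
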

 \begin{proof}
 We say that $z$ is reachable from $y$ (denote by $z\geq y$) if
$(y,z)\in P$. We define a relation $\rho$ on $A$ as follows:
 \[\rho=\{(y,z)\subseteq A\times A\ \text{such that}\  y\ \text{is reachable from }\ z\ \text{and}\ z\ \text{is reachable from}\  y\}.\]
 It is clear that the relation $\rho$ is reflexive, symmetric, and 
transitive, so it is an equivalence relation, 
splitting $A$ into equivalence classes $A_1,A_2,\dots,A_n.$ 
Now we show that $\operatorname{d}$ exists if $n=1$, which means that $A$ is a single equivalence class. In this case, $M(x,y)$ 
is defined for all $x,y$, and $M(x,y)M(y,x)=M(y,y)=1$, so $M(x,y)>0$. 
Let us pick $a\in A$ and define $\operatorname{d}$ as follows:
 \[\operatorname{d}(b)=M(b,a)\ \forall \ b\in A.\] We can multiply this function by an integer to make it integer-valued. 
By our assumption, this function satisfies the required 
condition (in fact we have a stronger condition $\operatorname{d}(y)= \operatorname{d}(z)M(y,z)$).

 Now consider the case $n>1$. We say that an equivalence class $C$ is a sink if each element reachable from $C$ is in $C$. We claim that there is a 
sink class $A_i$ among the equivalence classes $A_1,A_2,\dots,A_n$.
To prove this, assume the contrary, i.e.,  
that there is no sink class. Pick an arbitrary
equivalence class $A_{i_1}$. It is not a sink, so we can reach some 
$A_{i_2}$ from it with $i_2\ne i_1$, 
and we can reach $A_{i_3}$ from $A_{i_2}$, with $i_2\ne i_3$, and so on. 
For some $k<l$, we must have $i_k=i_{l+1}$, which is a contradiction, since 
$A_{i_k}\ne A_{i_{k+1}}$, and yet they are reachable from each other.  

Now we continue the proof of the lemma by induction on $|A|=N$. It is
clear that the statement is true when $|A|=1$ . Suppose that the
statement is true when $|A|<N$, and let us prove it for $|A|=N$. 
Let $C$ be a sink class, and
consider $A'=A\setminus C$. By the induction assumption, $\operatorname{d}$
exists on $A'$ and $C$. Since $C$ is a sink, if $y\in A'$ and $z\in C$
then $y$ is not reachable from $z$. So we may multiply the function
$\operatorname{d}$ on $A'$ by a large integer (without changing
$\operatorname{d}$ on $C$) so that $\operatorname{d}(y)\geq
\operatorname{d}(z)M(y,z)$ for all $y\in A',\ z\in C$ such that $z$ is
reachable from $y$. Then $\operatorname{d}$ satisfies the required
condition.
 \end{proof}

\begin{cor}\label{corolld}
For the semigroup $S$, there exists a positive integer 
function $\operatorname{d}$ on $A$ such that 
\begin{equation}\label{ineqd}
\operatorname{d}(y)\geq \operatorname{d}(z)M(y,z)
\end{equation}
whenever $z$ is $y$-persistent. 
\end{cor}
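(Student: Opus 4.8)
The plan is to read the Corollary off the abstract Lemma~\ref{lem2.4}, applied to the data coming from $S$. I would take the same finite set $A$, let
$$P=\{(y,z)\in A\times A \st z\text{ is }y\text{-persistent}\},$$
and let $M\colon P\to \mathbb{Q}_{\ge 0}$ be the slope function $(y,z)\mapsto M(y,z)$; this is a genuine function on $P$ because $M(y,z)$ is well defined whenever $z$ is $y$-persistent --- its existence is Lemma~\ref{firstlem}(i) and its independence of the auxiliary element is Lemma~\ref{firstlem}(ii). With this choice the entire content of the Corollary becomes: check that $P$ and $M$ satisfy the hypotheses of Lemma~\ref{lem2.4}, then quote it.

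So the key steps, in order, are the hypothesis checks. \emph{Reflexivity of $P$ and the normalization} $M(y,y)=1$: this is exactly the observation recorded just before Lemma~\ref{lem2.3} that $y$ is always $y$-persistent with $M(y,y)=1$. \emph{Transitivity of $P$ together with multiplicativity} $M(x,z)=M(x,y)M(y,z)$ whenever $(x,y),(y,z)\in P$: this is precisely Lemma~\ref{lem2.3} (with its indices $x,y,z$ lined up with the statement of Lemma~\ref{lem2.4}). \emph{Nonnegativity of $M$}: part of Lemma~\ref{firstlem}(i). Once these are in place, Lemma~\ref{lem2.4} yields a positive integer function $\operatorname{d}$ on $A$ with $\operatorname{d}(y)\ge \operatorname{d}(z)M(y,z)$ for all $(y,z)\in P$, which is exactly the assertion \eqref{ineqd}, since $(y,z)\in P$ means precisely that $z$ is $y$-persistent.

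The one point that is not a pure citation --- and what I expect to be the only delicate spot --- is the mismatch between the target of $M$ here and the hypothesis of Lemma~\ref{lem2.4}, which asks for an \emph{integer}-valued $M$, whereas Lemma~\ref{firstlem}(i) only guarantees a nonnegative rational slope. I would handle this in one of two ways: either note that $M(y,z)$ is in fact a nonnegative integer (writing out \eqref{eeq2} as $z^N y^{pr}=z^{N+M(y,z)pr}$ shows $M(y,z)pr\in\mathbb{Z}_{\ge 0}$, and this can be pushed a little further), or --- more cheaply --- observe that the proof of Lemma~\ref{lem2.4} is untouched by allowing $M$ to be rational, since $P$ is finite and a single common denominator can be cleared at the very end (indeed the step ``multiply this function by an integer to make it integer-valued'' in that proof is exactly such a clearing). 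Apart from this bookkeeping, there is no real obstacle: all the mathematical substance already lives in Lemmas~\ref{firstlem} and~\ref{lem2.3} and in the combinatorial Lemma~\ref{lem2.4}, and the Corollary is just the step that packages persistence as a reflexive transitive relation with a multiplicative cocycle and feeds it into Lemma~\ref{lem2.4}.
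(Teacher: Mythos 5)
Your proposal is correct and is essentially identical to the paper's own (very brief) proof, which likewise just applies Lemma~\ref{lem2.4} to the set $P$ of pairs $(y,z)$ with $z$ $y$-persistent, using Lemma~\ref{lem2.3} for transitivity and multiplicativity. Your remark about the rational-versus-integer target of $M$ is a fair catch of a small imprecision in the statement of Lemma~\ref{lem2.4}, and your fix (the proof of that lemma goes through verbatim for rational $M$, with denominators cleared at the end) is the right one.
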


\begin{proof} This follows from Lemma \ref{lem2.3} and Lemma \ref{lem2.4}, applying the latter to the set $P$ of 
pairs $(y,z)$ such that $z$ is $y$-persistent (and $A$, $M$ as above). 
\end{proof} 

Now pick a function $\operatorname{d}$ as  in Corollary \ref{corolld}. 
We extend this function from $A$ to $S$ by setting $\operatorname{d}(x^i)=\operatorname{ d}(x)i$, $i\in \Bbb Z_{>0}$. 
 
 \begin{lemma}
 \label{lem2.5}
 There exists a constant $K>0$ such that $\operatorname{d}(x)\le K$ and $\operatorname{d}(xu)\leq K+\operatorname{d}(u)$ for every $u\in S$ and $x\in A$.
 \end{lemma}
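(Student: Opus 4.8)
The plan is to reduce the inequality to the finitely many triples in $A\times A\times A$ and then feed in the linear formula of Lemma~\ref{firstlem} together with the inequality~\eqref{ineqd} of Corollary~\ref{corolld}. The bound $\operatorname{d}(x)\le K$ for $x\in A$ costs nothing: it holds as soon as $K\ge\max_{a\in A}\operatorname{d}(a)$, a finite maximum. So the real content is $\operatorname{d}(xu)\le K+\operatorname{d}(u)$. Writing a general $u\in S$ as $u=y^i$ with $y\in A$ and $i\ge 1$, we have $xu=xy^i=z^{n(x,y,i)}$ for $z=f(x,y,i)$, hence $\operatorname{d}(xu)=\operatorname{d}(z)\,n(x,y,i)$ and $\operatorname{d}(u)=\operatorname{d}(y)\,i$. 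The key structural observation is that $x,y,z$ all lie in the finite set $A$; only $i$ is unbounded.

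First I would handle small $i$. By Lemma~\ref{firstlem}(iii) --- concretely, by the fact used in its proof that $f(x,y,i)$ is non-persistent for only finitely many $i$ --- for each pair $(x,y)\in A\times A$ there is a threshold $I(x,y)$ with $z=f(x,y,i)$ being $(x,y)$-persistent for all $i\ge I(x,y)$. The elements $xy^i$ with $x,y\in A$ and $1\le i<I(x,y)$ form a finite set, so their $\operatorname{d}$-values are bounded by some constant $K_2$; for such $i$ the desired inequality is immediate once $K\ge K_2$, since $\operatorname{d}(u)\ge 0$.

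For $i\ge I(x,y)$, put $z=f(x,y,i)$, which is $(x,y)$-persistent and therefore $y$-persistent. Lemma~\ref{firstlem}(i),(ii) gives
$$
n(x,y,i)=n_0(x,y,z)+M(y,z)\bigl(i-i_0(x,y,z)\bigr)\le n_0(x,y,z)+M(y,z)\,i ,
$$
where $n_0(x,y,z),i_0(x,y,z)$ depend only on the triple, and $M(y,z)\ge 0$. Multiplying by $\operatorname{d}(z)$ and applying~\eqref{ineqd} (which says $\operatorname{d}(z)M(y,z)\le\operatorname{d}(y)$ because $z$ is $y$-persistent), we obtain
$$
\operatorname{d}(xu)\le\operatorname{d}(z)\,n_0(x,y,z)+\operatorname{d}(z)M(y,z)\,i\le\operatorname{d}(z)\,n_0(x,y,z)+\operatorname{d}(u).
$$
Taking $K_1$ to be the maximum of $\operatorname{d}(z)\,n_0(x,y,z)$ over the finitely many triples $(x,y,z)\in A^3$ with $z$ being $(x,y)$-persistent, and then $K=\max\{\max_{a\in A}\operatorname{d}(a),\,K_1,\,K_2\}$, would finish the argument.

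Rather than a genuine obstacle, the point to be careful about is the bookkeeping: one must check that every auxiliary quantity ($n_0$, $i_0$, the periodicity threshold $I(x,y)$, and the finitely many base values $\operatorname{d}(xy^i)$ for $i<I(x,y)$) is drawn from a list indexed by $A$, $A^2$, or $A^3$, and in particular that in this lemma $x$ is restricted to $A$, not to all of $S$ --- this is what makes the finiteness arguments go through. The degenerate case $M(y,z)=0$ deserves a glance but is harmless, as the displayed inequalities were written to survive it.
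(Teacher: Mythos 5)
Your proposal is correct and follows essentially the same route as the paper: reduce to the finitely many triples $(x,y,z)\in A^3$ with $z$ being $(x,y)$-persistent, apply the linear formula $n(x,y,i)=n_0+M(y,z)(i-i_0)$ from Lemma~\ref{firstlem} together with $\operatorname{d}(z)M(y,z)\le\operatorname{d}(y)$ from \eqref{ineqd}, and absorb the finitely many non-persistent indices $i$ into the constant. The only cosmetic difference is that you drop the $-\operatorname{d}(y)i_0$ term where the paper keeps it, which changes nothing.
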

 
 \begin{proof}
 Let $y\in A$. It suffices to prove that there exists $K_y>0$ such that 
 $$
 \operatorname{d}(xy^i)\leq K_y+\operatorname{d}(y^i)=K_y+\operatorname{d}(y)i
 $$
 for all $i$. Then we can take $K'={\rm max}_y K_y$, and then pick $K\ge K'$ so that $\operatorname{d}(x)\le K$ for all $x\in A$. Further, it is enough to show that for each $(x,y)$-persistent
 $z$, there exists $K_{y,z}>0$ such that $\operatorname{d}(xy^i)\leq K_{y,z}+\operatorname{d}(y)i$ whenever $f(x,y,i)=z$. Indeed, 
 taking $K_y':={\rm max}_z K_{y,z}$, we see that the inequality $\operatorname{d}(xy^i)\leq K_y'+\operatorname{d}(y)i$ holds for almost all $i$
 (namely, for all $i$ such that $f(x,y,i)$ is $(x,y)$-persistent), so there exists $K_y\ge K_y'$ such that 
 $\operatorname{d}(xy^i)\leq K_y+\operatorname{d}(y)i$ for all $i$. 
 
 By Lemma \ref{firstlem}, if $f(x,y,i)=z$ then $xy^i=z^{n_0+M(y,z)(i-i_0)}$,  
 so by \eqref{ineqd}, 
 $$
\operatorname{d}(xy^i)=\operatorname{d}(z^{n_0+M(y,z)(i-i_0)})=\operatorname{d}(z)(n_0+M(y,z)(i-i_0))\le 
$$
$$
\le \operatorname{d}(z)n_0+\operatorname{d}(y)(i-i_0)=\operatorname{d}(z)n_0-\operatorname{d}(y)i_0+\operatorname{d}(y)i.
 $$
 So we may take $K_{y,z}$ to be any positive number such that 
 $K_{y,z}\ge \operatorname{d}(z)n_0-\operatorname{d}(y)i_0$.
 \end{proof} 
 
 \begin{cor}
 \label{cor2.6}
 If $u$ is a word of length $n$ then $\operatorname{d}(u)\leq Kn.$
\end{cor}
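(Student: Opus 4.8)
The plan is a direct induction on the length $n$, using Lemma \ref{lem2.5} as the engine. First I would fix the interpretation: a \emph{word of length $n$} means an element $u\in S$ written as a product $u=x_1x_2\cdots x_n$ with each $x_i\in A$; since $A$ generates $S$ and $\operatorname{d}$ has been extended to all of $S$, the quantity $\operatorname{d}(u)$ is well defined for any such $u$, and what we must bound is $\operatorname{d}(x_1\cdots x_n)$ in terms of $n$ and the constant $K$ produced in Lemma \ref{lem2.5}.

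For the base case $n=1$ we have $u=x_1\in A$, and the first assertion of Lemma \ref{lem2.5}, namely $\operatorname{d}(x)\le K$ for all $x\in A$, gives $\operatorname{d}(u)\le K=K\cdot 1$. For the inductive step, suppose the bound holds for all words of length $n-1$, and let $u=x_1x_2\cdots x_n$. Set $v=x_2\cdots x_n\in S$, a word of length $n-1$. Then $u=x_1v$ with $x_1\in A$ and $v\in S$, so the second assertion of Lemma \ref{lem2.5}, $\operatorname{d}(xu')\le K+\operatorname{d}(u')$ for $x\in A$ and $u'\in S$, yields $\operatorname{d}(u)=\operatorname{d}(x_1v)\le K+\operatorname{d}(v)$. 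Applying the induction hypothesis $\operatorname{d}(v)\le K(n-1)$ gives $\operatorname{d}(u)\le K+K(n-1)=Kn$, completing the induction.

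I do not expect any genuine obstacle here: all the work has already been absorbed into Lemma \ref{lem2.5}, whose proof handled the persistence analysis and the inequality \eqref{ineqd}. The only point that deserves a word of care is that Lemma \ref{lem2.5} is stated for arbitrary $u\in S$ (not just for $u$ a power of a single generator), which is exactly what lets the inductive peeling-off of one generator at a time go through without restriction. This corollary is then the bridge to linear growth: it shows the ``degree'' $\operatorname{d}$ of any product of $n$ generators grows at most linearly in $n$, and combined with the fact (from Lemma \ref{firstlem} and the structure of the $N_a$) that elements of bounded $\operatorname{d}$ are finite in number, it will bound the number of distinct elements represented by words of length $\le n$.
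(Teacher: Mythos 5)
Your proof is correct and is exactly the argument the paper intends: the paper's proof is the one-line remark that the corollary ``clearly follows from Lemma \ref{lem2.5} by induction in $n$,'' and your write-up simply spells out that induction (base case from $\operatorname{d}(x)\le K$, inductive step from $\operatorname{d}(xu)\le K+\operatorname{d}(u)$). Nothing is missing.
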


\begin{proof}
This clearly follows from Lemma \ref{lem2.5} by induction in $n$. 
\end{proof} 
 
 \begin{prop}
 \label{prop2.7}
 For $r>0$, let $I(r)=\{w\in S\ \text{such that}\ \operatorname{d}(w)\leq r\}$. 
 Then there exists $L>0$ such that $|I(r)|\leq Lr$ for all $r$. 
 \end{prop}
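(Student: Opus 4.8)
The plan is to exploit the disjoint-union structure of $S$ directly and reduce the count to an elementary estimate on each monogenic piece $N_a$; there is essentially no hard analysis needed, only careful bookkeeping.

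First I would record the relevant uniqueness: since the union $S=\bigcup_{a\in A}N_a$ is disjoint and each $N_a=\langle a\rangle$ is free monogenic, every $w\in S$ has a \emph{unique} expression $w=a^i$ with $a\in A$ and $i\in\mathbb{Z}_{>0}$, and for this $w$ we have $\operatorname{d}(w)=\operatorname{d}(a)\,i$ by the very definition of the extension of $\operatorname{d}$ from $A$ to $S$. Consequently
$$
I(r)=\bigsqcup_{a\in A}\{\,a^i \st i\in\mathbb{Z}_{>0},\ \operatorname{d}(a)\,i\le r\,\},
$$
a disjoint union indexed by the finite set $A$.

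Next, for a fixed $a\in A$ the constraint $\operatorname{d}(a)\,i\le r$ is just $i\le r/\operatorname{d}(a)$, so the number of admissible exponents $i$ equals $\lfloor r/\operatorname{d}(a)\rfloor$. Because $\operatorname{d}$ is a positive integer function we have $\operatorname{d}(a)\ge 1$, hence $\lfloor r/\operatorname{d}(a)\rfloor\le r/\operatorname{d}(a)\le r$. Summing over the finitely many $a\in A$ gives $|I(r)|\le |A|\,r$, so the claim holds with $L=|A|$ (in fact with the sharper value $L=\sum_{a\in A}\operatorname{d}(a)^{-1}$).

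The only step that warrants any care is the first one — the uniqueness of the representation $w=a^i$ — and that is precisely where disjointness of the union and freeness of the $N_a$ are used; once it is in place the bound is a one-line computation, so I would not expect a genuine obstacle here. (This proposition is the combinatorial core of the linear growth statement: by Corollary~\ref{cor2.6} every element represented by a word of length $n$ lies in $I(Kn)$, and there are at most $LKn$ such elements.)
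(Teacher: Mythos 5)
Your proof is correct and follows essentially the same route as the paper's: decompose $I(r)$ according to which copy $N_a$ an element lies in, observe that $\operatorname{d}(a^m)=\operatorname{d}(a)m\le r$ forces $m\le r/\operatorname{d}(a)$, and sum over the finite index set to get $L=\sum_{a\in A}\operatorname{d}(a)^{-1}$. Your explicit remark on the uniqueness of the representation $w=a^i$ is a minor (and welcome) elaboration of a step the paper leaves implicit.
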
 
 
 \begin{proof}
 Let $a\in A$ and suppose $a^m\in I(r)$. Since $\operatorname{d}(a^m)=\operatorname{d}(a)m$, we get that $m\le \frac{r}{\operatorname{d}(a)}$. 
 So the number of elements of $I(r)$ of the form $a^m$ is $\big[\frac{r}{\operatorname{d}(a)}\big]$. 
 Hence we may take $L=\sum_{a\in A}\frac{1}{\operatorname{d}(a)}$.
  \end{proof}
  
 \begin{thm}\label{lingrowth}
 \label{thm2.8}
 The semigroup $S$ has linear growth.
 \end{thm}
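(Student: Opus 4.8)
The plan is simply to assemble Corollary~\ref{cor2.6} and Proposition~\ref{prop2.7}. Note first that $S$ is finitely generated as a semigroup by $A$: every element of $S$ lies in some $N_a=\langle a\rangle$ and hence equals $a^m$ for some $m\ge 1$, i.e.\ the word $a\cdots a$ ($m$ letters) over the alphabet $A$. Since the growth type of a finitely generated semigroup does not depend on the choice of finite generating set, it suffices to bound, for each $n\ge 1$, the cardinality of the ball $B(n)=\{w\in S\st w \text{ is a product of at most } n \text{ elements of } A\}$.

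I would then argue that $B(n)\subseteq I(Kn)$, where $K$ is the constant of Lemma~\ref{lem2.5} and $I(r)=\{w\in S\st \operatorname{d}(w)\le r\}$ is as in Proposition~\ref{prop2.7}. Indeed, an element $w\in B(n)$ is a word of some length $m\le n$ over $A$, so Corollary~\ref{cor2.6} gives $\operatorname{d}(w)\le Km\le Kn$, that is, $w\in I(Kn)$. Proposition~\ref{prop2.7} then yields
$$
|B(n)|\ \le\ |I(Kn)|\ \le\ L\cdot Kn,
$$
where $L=\sum_{a\in A}\frac1{\operatorname{d}(a)}$. Hence the growth function of $S$ with respect to $A$ is bounded above by the linear function $n\mapsto LKn$, so $S$ has at most linear growth; being infinite and finitely generated it has at least linear growth, whence linear growth as claimed.

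There is essentially no obstacle at this final stage: the substance of the argument has already been carried out in Lemma~\ref{firstlem}, Lemma~\ref{lem2.3}, Lemma~\ref{lem2.4}, Corollary~\ref{corolld}, and Lemmas~\ref{lem2.5}--\ref{cor2.6}, which together produce the "dimension" function $\operatorname{d}$ enjoying the two crucial properties used above: sub\-additivity along generators (Corollary~\ref{cor2.6}) and the fact that only $O(r)$ elements have $\operatorname{d}$-value at most $r$ (Proposition~\ref{prop2.7}). The only minor points requiring care are the identification of $A$ as a legitimate finite generating set of $S$ and the translation of the word-length statement in Corollary~\ref{cor2.6} into the inclusion $B(n)\subseteq I(Kn)$.
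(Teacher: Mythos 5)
Your proof is correct and is essentially identical to the paper's: both define the ball of words of length at most $n$, use Corollary~\ref{cor2.6} to place it inside $I(Kn)$, and apply Proposition~\ref{prop2.7} to get the linear bound $LKn$. The extra remarks you add (that $A$ generates $S$, and that the growth is at least linear since $S$ is infinite) are harmless elaborations of points the paper leaves implicit.
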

 \begin{proof}
 Let $J(m)$ be the set of $w\in S$ which can be represented by a word of length $\le m$. 
 Then by Corollary \ref{cor2.6} $J(m)\subset I(Km)$. 
 Hence $|J(m)|\leq |I(Km)|\le LKm$ by Proposition \ref{prop2.7}.
 This implies the theorem. 
  \end{proof}
  
\begin{cor} Let $\overline{S}:=1\cup S$ be the monoid obtained by adding a unit to $S$, and $A=\Bbb F\overline{S}$ be the algebra spanned by $\overline{S}$ 
over any field $\Bbb F$. Then $A$ is a PI algebra.  
\end{cor}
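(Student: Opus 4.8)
The plan is to derive PI-ness purely from the linear growth of $S$ (Theorem \ref{thm2.8}), using the theorem of Small, Stafford and Warfield that a finitely generated algebra over a field of Gelfand--Kirillov dimension at most one is PI. The first point to settle is that $A=\mathbb{F}\overline{S}$ is affine: every element of $S$ lies in some $N_a=\langle a\rangle$ and is therefore a power of the generator $a$, so $S$ is generated as a semigroup by the finite set $\{a:a\in A\}$; consequently $\overline{S}=S\cup\{1\}$ is generated as a monoid by this same finite set (the identity being the empty product), and $A=\mathbb{F}\overline{S}$ is generated as an $\mathbb{F}$-algebra by it.

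Next I would translate the combinatorial growth bound for $S$ into a bound for the Hilbert-type growth function of $A$. Let $A_{\le m}\subseteq A$ denote the $\mathbb{F}$-span of all products of at most $m$ of the generators $a\in A$ inside $\overline{S}$. Since $\overline{S}$ is an $\mathbb{F}$-basis of $A$ and each such product is either $1$ or an element of $S$ represented by a word of length $\le m$ (in the notation of Theorem \ref{thm2.8}, an element of $J(m)$), we get $\dim_{\mathbb{F}}A_{\le m}\le 1+|J(m)|$. Feeding in the estimate $|J(m)|\le LKm$ established in the proof of Theorem \ref{thm2.8} gives $\dim_{\mathbb{F}}A_{\le m}\le LKm+1$ for all $m$, and therefore
$$
\operatorname{GKdim} A=\limsup_{m\to\infty}\frac{\log\dim_{\mathbb{F}}A_{\le m}}{\log m}\le 1 .
$$

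Finally I would apply the theorem of Small, Stafford and Warfield, which asserts that a finitely generated algebra over a field of Gelfand--Kirillov dimension at most one is PI (the subcase $\operatorname{GKdim}A=0$, where $A$ is finite-dimensional, being trivial), to conclude that $A$ is a PI algebra. I do not expect a genuine obstacle here once Theorem \ref{thm2.8} is in hand: one only needs to check that $\overline{S}$ is finitely generated so that GK dimension makes sense, and to match the combinatorial growth function $|J(m)|$ with the algebra's growth function $\dim_{\mathbb{F}}A_{\le m}$, which is immediate because $\overline{S}$ is an $\mathbb{F}$-basis of $A$. The substantive input is the cited Small--Stafford--Warfield theorem; a self-contained route would instead have to exhibit an explicit polynomial identity, which would be considerably more laborious.
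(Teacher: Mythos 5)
Your proposal is correct and takes essentially the same approach as the paper: both deduce the result by applying the Small--Stafford--Warfield theorem to the finitely generated algebra $\mathbb{F}\overline{S}$, using the linear growth established in Theorem \ref{lingrowth}. You merely spell out the routine verifications (finite generation of $\overline{S}$ and the translation of the bound $|J(m)|\le LKm$ into $\operatorname{GKdim} A\le 1$) that the paper's one-line proof leaves implicit.
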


\begin{proof} By a theorem of Small, Stafford, and Warfield \cite{SSW}
any finitely generated algebra of linear growth (i.e., GK dimension 1) is PI. 
Thus the result follows from Theorem \ref{lingrowth}.   
\end{proof}

\begin{flushleft}
Mathematical Sciences Department\\
Princess Nourah bint Abdulrahman University\\
  P.O.Box 84428, Riyadh 11671\\

\smallskip
\text{E-mail address}: \texttt{nhabughazala@pnu.edu.sa}

\end{flushleft}

\begin{flushleft}
Department of Mathematics\\
Massachusetts Institute of Technology\\
 Cambridge, MA, 02139\\

\smallskip
\text{E-mail address}: \texttt{etingof@math.mit.edu}

\end{flushleft}

\end{document}